\newtheorem{theorem}{Theorem}[section]
\newtheorem{lemma}[theorem]{Lemma}
\newtheorem{proposition}[theorem]{Proposition}
\def \T{\mathrm{T}}
\def \P{\mathrm{P}}
\def \M{\mathrm{M}}
\def \S{\mathrm{S}}
\def \I{\mathrm{I}}
\def \bi{\dot{B}^{-1}_{2,\infty}}
\def \b{\dot{B}^{1}_{2,1}}
\def \Z{\mathbb{Z}}
\def \D{\Omega}
\def \N{\mathrm N}
\def\R{\mathbb{R}}
\def\2L{\Lambda_{\tilde{\gamma}}}
\def\1L{\Lambda_{\gamma}}
\renewcommand{\le}{\leq}
\renewcommand{\ge}{\geq}
\title 
[Unique determination of planar potentials]
{Unique determination  of the electric potential in the presence of a fixed magnetic potential in the plane}
\author
[P. \lastname Caro]
{\firstname{Pedro} \lastname{Caro}}
\address{BCAM - Basque Center for Applied Mathematics, 48009 Bilbao, Spain and Ikerbasque, Basque Foundation for Science, 48011 Bilbao, Spain} %
\email{pcaro@bcamath.org}
\author
[K. M. \lastname{Rogers}]
{\firstname{Keith} \middlename{M.} \lastname{Rogers}}
\address{Instituto de Ciencias Matem\'aticas CSIC-UAM-UC3M-UCM, 28049 Madrid, Spain}
\email{keith.rogers@icmat.es}
\thanks{Mathematics Subject Classification. Primary 35P25, 45Q05; Secondary 35J10}
\thanks{The first author was supported by MTM2015-69992-R, Severo Ochoa SEV-2017-0718 and BERC 2018-2021. The second author was supported by MTM2017-85934-C3-1-P and  Severo Ochoa SEV-2015-0554.}
\keywords{}
\begin{document}

\begin{abstract}
For potentials $V\in L^\infty(\mathbb{R}^2,\mathbb{R})$ and $A\in W^{1,\infty}(\mathbb{R}^2,\mathbb{R}^2)$ with compact support, we consider the  Schr\"odinger equation $-(\nabla +iA)^2 u+Vu=k^2u$ with fixed positive energy $k^2$.  Under a mild additional regularity hypothesis, and with fixed magnetic potential $A$, we show that the scattering solutions uniquely determine the electric potential~$V$. For this we develop the method of Bukhgeim for the purely electric Schr\"odinger equation. 
\end{abstract}

%

\maketitle

\section{Introduction}
We will assume throughout that the electric potential $V\in L^\infty(\mathbb{R}^2,\mathbb{R})$ and the magnetic potential $A\in W^{1,\infty}(\R^2,\R^2)$ have  compact support. It is a classical problem to recover~$V$ from the scattering data. Due to a gauge invariance, $A$ is not uniquely determined, however the magnetic field $\mathrm{curl} A$ could be. These problems have been studied extensively in higher dimensions; see for example \cite{DKSU, ER0, ER, H, KU, NU,NSU88, PSU, salo, salo1, salo2,sun} and the references therein. The two dimensional problem has proved more difficult and progress was made only relatively recently based on a method of Bukhgeim;  see for example \cite{A, AGTU, GT, IUY}.

Here we will not consider whether the magnetic field is uniquely determined or not. Our more modest goal will be to prove that the electric potential $V$ is uniquely determined assuming that the magnetic potential  $A$ is fixed. For the analogous two dimensional problem with $A\equiv 0$, see \cite{AFR, Blast, Blast2, Blast3, B, IN, LTV, LV, Nachman96, tej1, tej2, tej3} and the references therein.

We consider a bounded domain $\Omega\subset \mathbb{R}^2$ that contains the support of our potentials and for which $0$ is not a
Dirichlet eigenvalue for the Hamiltonian $-(\nabla +iA)^2 +V$. In this case, for all 
 $f\in H^{1/2}(\partial \Omega)$, there is a unique solution $u\in H^1(\Omega)$ to the Dirichlet problem
  \begin{equation}\label{dp}
\begin{cases}
(\nabla +iA)^2 u=Vu\\
u\big|_{\partial \Omega}=f,
\end{cases}
\end{equation}
and the Dirichlet-to-Neumann ({\small DN}) 
map $\Lambda_{V}$ can be formally defined by $$\Lambda_{V}\,:\,f\mapsto
(\nabla u \cdot n+iA\cdot n\,u)|_{\partial\Omega};$$
see the appendix for more details. Now if
 $u, v\in H^1(\D)$ satisfy $$(\nabla +iA)^2 u=V_1u\quad \text{and} \quad (\nabla +iA)^2 v=V_2v,$$ then we have an Alessandrini-type identity
\begin{equation}\label{Alessandrini}
\int_{\partial\Omega} \big(\Lambda_{V_1}-\Lambda_{V_2}\big)[u]\, \overline{v}=
\int_\D  \big(V_1-V_2\big) u \,\overline{v}.
\end{equation}
When the boundary and solutions are sufficiently regular this follows from Green's identity (and is almost direct from the rigorous definition of the {\small DN} map). 
Assuming that we can conclude that the left-hand side of this expression is zero, our problem reduces to constructing solutions $u$ and~$v$ for which the right-hand side converges to a constant multiple of $V_1-V_2$, allowing us to conclude that $V_1=V_2$.

For this we will require that both the magnetic field and the electric potential have some additional regularity which we measure in $L^2$-Sobolev spaces with norm given by $$\|f\|_{H^s}:=\|(\I-\Delta)^{s/2}f\|_{L^2},$$ where the fractional derivatives are defined $(\I-\Delta)^{s/2}f:=((1+|\cdot|^2)^{s/2}\widehat{f}\,)$ via the Fourier transform as usual.

\begin{theorem}\label{main} Suppose additionally that  $V_1,V_2,\mathrm{curl}A\in H^{s}$ for some $s>0$. Then
$$\Lambda_{V_1}=\Lambda_{V_2}\ \Rightarrow\  V_1=V_2 \quad \textrm{a.e.} \ x \in \Omega.
$$
\end{theorem}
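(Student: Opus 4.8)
The plan is to follow Bukhgeim's strategy, reducing the first-order magnetic Schrödinger operator to a first-order $\overline\partial$-system and inserting complex-geometric-optics (CGO) solutions concentrating at a point. Writing $z=x_1+ix_2$ and factoring $(\nabla+iA)^2-V$ as a composition of two first-order operators, one rewrites the equation $(\nabla+iA)^2u=V_ju$ as a $2\times 2$ first-order system of the form $\overline\partial \Phi = Q_j\Phi$ (after conjugating away the magnetic potential by a scalar factor $e^{\phi}$ solving $\overline\partial\phi = -\tfrac i2(A_1+iA_2)$ or similar, which is where the regularity $\mathrm{curl}A\in H^s$ enters — one needs the conjugating factor to be Lipschitz, uniformly, so that the transformed potential $Q_j$ inherits enough regularity and decay). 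Since $A$ is the \emph{same} for both potentials, the same conjugating factor works for $j=1,2$ simultaneously, so the difference $Q_1-Q_2$ is controlled by $V_1-V_2$ up to a bounded invertible factor. I would first carry out this reduction carefully, recording the precise mapping properties and the role of the additional Sobolev regularity.

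Next I would construct, for each large parameter $\tau$ and each base point $z_0\in\Omega$, CGO solutions $u_\tau$ and $v_\tau$ to the two equations of the form $u_\tau = e^{\tau\Phi_{z_0}(z)}(a + r_\tau)$, where $\Phi_{z_0}(z)=(z-z_0)^2$ is a limiting Carleman weight with a nondegenerate critical point at $z_0$, $a$ is a nonvanishing smooth amplitude (chosen to absorb the conjugating factor and the first-order part of the operator), and $r_\tau\to 0$ in $L^2(\Omega)$ as $\tau\to\infty$. The construction of $r_\tau$ rests on $L^2$ Carleman estimates / $\overline\partial$-resolvent bounds with the quadratic-phase weight; these are exactly the estimates underlying the works \cite{B, IUY, AGTU}, and in the magnetic setting one must check that the conjugated first-order term is harmless (this is standard once $A$ has been gauged away up to a Lipschitz error). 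I would then substitute these solutions into the Alessandrini identity \eqref{Alessandrini}, whose left-hand side vanishes by hypothesis $\Lambda_{V_1}=\Lambda_{V_2}$, obtaining
\begin{equation*}
\int_\Omega (V_1-V_2)\, e^{2\tau\,\mathrm{Re}\,\Phi_{z_0}}\,|a|^2\,(1+o(1))=0.
\end{equation*}
A stationary-phase / Laplace-type analysis at the nondegenerate critical point $z_0$ then shows that the leading contribution is $c\,\tau^{-1}(V_1-V_2)(z_0)$ plus lower-order terms, so letting $\tau\to\infty$ yields $(V_1-V_2)(z_0)=0$ at every Lebesgue point $z_0$, hence $V_1=V_2$ a.e.

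The main obstacle I anticipate is controlling the error term $r_\tau$ and the stationary-phase remainder at the very low regularity $V_j\in L^\infty\cap H^s$ with $s>0$ arbitrarily small: the naive stationary-phase expansion needs the potential to be continuous or better, so one must instead mollify $V_j=V_j^{\sharp}+V_j^{\flat}$ at scale depending on $\tau$, handle the smooth part $V_j^{\sharp}$ by honest stationary phase and the rough part $V_j^{\flat}$ by an $L^2$–$L^2$ bound exploiting that $\|V_j^{\flat}\|_{L^2}\to 0$ together with the $\tau^{-1/2}$-type decay of $\|e^{2\tau\,\mathrm{Re}\,\Phi_{z_0}}\|$ localized near $z_0$, optimizing the split. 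A secondary technical point, special to the magnetic case, is ensuring the conjugating factor $e^{\phi}$ and its inverse are genuinely bounded and Lipschitz on $\Omega$ with constants independent of $z_0$; this is precisely why $\mathrm{curl}A\in H^s$ (rather than merely $A\in W^{1,\infty}$) is assumed, since one solves $\overline\partial$-type equations with right-hand side built from $\mathrm{curl}A$ and needs the solution in $C^{0,1}$ via a Sobolev embedding that fails at the endpoint $s=0$.
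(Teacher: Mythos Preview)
Your overall plan---Bukhgeim CGO solutions with quadratic phase, gauging out $A$ by a Cauchy-transform factor, Alessandrini's identity, then stationary phase---matches the paper's strategy. Two points need correction, one of them a genuine gap.

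First, the reason $\mathrm{curl}\,A\in H^s$ is assumed is not Lipschitz control of the gauge factor. The paper factorises $(\nabla+iA)^2 u - Vu = (\partial+i\overline A)(\overline\partial+iA)u - (V-\mathrm{curl}\,A)u$, so the \emph{effective} scalar potential in the equation for the remainder $w$ is $V-\mathrm{curl}\,A$; the contraction is run in $\dot H^s$ and its rate is governed by $\|V-\mathrm{curl}\,A\|_{\dot H^s}$ (Lemmas~\ref{celtic7} and \ref{celtic3}). The boundedness of the multiplier $\N_{\pm A}[F]=e^{\pm i(\partial^{-1}\overline A-\overline\partial^{-1}A)}F$ on $\dot H^s$ (Lemma~\ref{easy}) needs only $\mathrm{curl}\,A\in L^2$.

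Second, and more seriously, your endgame is a \emph{pointwise} stationary-phase argument at each $z_0$, with a $\tau$-dependent mollification to absorb the low regularity. Here the paper does something different, and your sketch does not close. After inserting the CGOs into \eqref{Alessandrini} and multiplying by $\tau$, the combined phase is purely oscillatory, $e^{i(\psi+\overline\psi)}=\exp\!\big(i\tau\tfrac{(z_1-x_1)^2-(z_2-x_2)^2}{4}\big)$; your ``$e^{2\tau\,\mathrm{Re}\,\Phi_{z_0}}$'' and ``Laplace-type'' would be an unbounded real exponential---keep the $i$. The main term is then recognised as the time-$1/\tau$ nonelliptic Schr\"odinger evolution $e^{i\tau^{-1}\Box}$ applied to $\N_A[V_1-V_2]$, and the limit is taken in $L^2_x$, \emph{not} pointwise; the cross terms involving $w_1,\overline{w_2},w_1\overline{w_2}$ are shown to vanish uniformly in $x$ by coupling the oscillatory-multiplier decay of Lemma~\ref{isitgood} with the $\dot H^s$ control on $w$ (Propositions~\ref{Remainder} and \ref{Remainder2}). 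Your pointwise route runs into a real obstruction: for $F\in H^s$ with $s>0$ arbitrarily small, a.e.\ convergence of $\tau\!\int e^{i\tau\phi(\cdot-x)}F\to cF(x)$ is a Carleson-type question that fails below a positive regularity threshold, and the mollification balance you propose (stationary phase on the smooth part, $L^2$ smallness on the rough part) does not close---the stationary-phase remainder for $F^\sharp$ costs negative powers of the mollification scale that the $O(\epsilon^{s})$ smallness of $F^\flat$ cannot compensate when $s$ is small. The paper's $L^2_x$ limit avoids this entirely.
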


The assumption $\Lambda_{V_1}=\Lambda_{V_2}$ ensures that the left-hand side of \eqref{Alessandrini} is zero. In the appendix we will arrive to the same conclusion by instead assuming that the outgoing scattering solutions coincide.

In what remains of the introduction we sketch the proof of Theorem~\ref{main} assuming the more technical results that will later follow. 

For the Schr\"odinger equation with purely electric potential, Bukhgeim \cite{B} considered solutions of the form $u = e^{i\psi}\big(1+w\big)$, where from now on
$$\psi(z)\equiv \psi_{\tau,x}(z)=\tfrac{\tau}{8}(z-x)^2,\qquad z\in \mathbb{C},\ \ x\in\Omega.$$
We modify his approach, instead considering solutions to 
$
(\nabla +iA)^2 u=V_1u
$
of the form $$u_1 = e^{i(\psi-\overline{\partial}^{-1}\!A)}\big(1+w_1\big).$$ Here $\overline{\partial}^{-1}$ denotes a constant multiple of the Cauchy transform which inverts $\overline{\partial}=\partial_{z_1}+i\partial_{z_2}$. 
In the following section, we prove that we can take $w\equiv w_{\tau,x}\in
H^s$ with a bound for the norm that tends to zero as~$\tau\to\infty$. This was first proven for purely electric potentials by Bl\aa sten \cite{Blast}.  

The same procedure yields solutions to 
$
(\nabla +iA)^2 u=V_2u
$
of the form $$u_2=e^{-i(\psi+\overline{\partial}^{-1}\!A)}\big(1+w_2\big).$$
Plugging these solutions, which are also in $H^1(\Omega)$, into \eqref{Alessandrini} yields
\begin{equation}\label{ale}
0=\int e^{i({\psi}+\overline{{\psi}})}e^{i(\partial ^{-1}\!\overline{A}-\overline{\partial}^{-1}\!A)}\big(V_1-V_2\big)(1+w_1)(1+\overline{w}_2)\,.
\end{equation}
Note that the integral on the right-hand side of \eqref{ale} depends on $x$ through $\psi$, $w_1$ and $w_2$.
We will see that, after multiplying this identity by a constant multiple of~$\tau$ and letting $\tau\to\infty$, the left-hand side converges in $L^2_{x}$ to $$e^{i(\partial ^{-1}\!\overline{A}-\overline{\partial}^{-1}\!A)}(V_1-V_2),$$ so that this expression must also be zero almost everywhere.
Now as $\partial ^{-1}\!\overline{A}-\overline{\partial}^{-1}\!A$ is bounded, we can conclude that $V_1-V_2=0$ almost everywhere, uniquely determining the electric potential.

\vspace{1em}

\noindent{\bf Acknowledgement:} The authors thank Mikko Salo for helpful correspondence regarding the connection with scattering and Alexey Agaltsov for bringing pertinent references to their attention.

\section{Magnetic Bukhgeim solutions}

We rewrite the Schr\"odinger equation $(\nabla +iA)^2 u=Vu$ as
$$
(\partial +i\overline{A})(\overline{\partial}+iA)u=(V-\mathrm{curl}A)u,
$$
where on the left-hand side, we have identified the vector $(A_1,A_2)$ with the complex number $A_1+iA_2$ and $\overline{A}=A_1-iA_2$. Considering solutions of the form $$u = e^{i(\psi-\overline{\partial}^{-1}\!A)}\big(1+w\big),$$ this is equivalent to 
$$
e^{-i(\psi-\overline{\partial}^{-1}\!A)}(\partial +i\overline{A})(\overline{\partial}+iA)\big[e^{i(\psi-\overline{\partial}^{-1}\!A)}w\big]=(V-\mathrm{curl}A)(1+w),
$$
using that $\overline{\partial} \psi = 0$.
Noting that 
$$
\partial +i\overline{A}=e^{-i\partial ^{-1}\!\overline{A}}\partial \big[e^{i\partial ^{-1}\!\overline{A}}\cdot\big]\quad \text{and}\quad \overline{\partial}+iA=e^{-i\overline{\partial}^{-1}\!A}\overline{\partial}\big[
e^{i\overline{\partial}^{-1}\!A}\cdot\big],
$$
we can rewrite this as
$$
e^{-i(\psi-\overline{\partial}^{-1}\!A)}e^{-i\partial ^{-1}\!\overline{A}}\partial \Big[e^{i\partial ^{-1}\!\overline{A}}e^{-i\overline{\partial}^{-1}\!A}\overline{\partial}
[e^{i\psi}w]\Big]=(V-\mathrm{curl}A)(1+w).
$$
Moreover we can write 
$$
\partial =e^{-i\overline{\psi}}\partial \big[e^{i\overline{\psi}}\cdot\big],
$$
so that this can be rewritten as
$$
e^{-i(\psi+\overline{\psi})}e^{-i(\partial ^{-1}\!\overline{A}-\overline{\partial}^{-1}\!A)}\partial \Big[e^{i(\psi+\overline{\psi})}e^{i(\partial ^{-1}\!\overline{A}-\overline{\partial}^{-1}\!A)}\overline{\partial}w\Big]=(V-\mathrm{curl}A)(1+w).
$$
To solve this in $\Omega$, we define the inverse conjugated Laplacian $\Delta^{-1}_{\psi}$ by 
$$
\Delta^{-1}_{\psi} F:=\overline{\partial}^{-1}\left[\mathbf{1}_Qe^{-i(\psi+\overline{\psi})}e^{-i(\partial ^{-1}\!\overline{A}-\overline{\partial}^{-1}\!A)}\partial^{-1} \big[\mathbf{1}_Qe^{i(\psi+\overline{\psi})}e^{i(\partial ^{-1}\!\overline{A}-\overline{\partial}^{-1}\!A)}F\big]\right],
$$
where $Q$ is an axis parallel square that contains $\Omega$, and look for $w$ that satisfy 
$$
w=\Delta_{\psi}^{-1}[(V-\mathrm{curl}A)(1+w)].
$$
Defining $\S_\tau[F]=\Delta^{-1}_{\psi}[(V-\mathrm{curl}A)F]$, this can be rewritten as
$$
(\I-\S_\tau)w=\Delta^{-1}_{\psi}[V-\mathrm{curl}A]
$$
and if we can show that $\S_\tau$ is a contraction we can invert $(\I-\S_\tau)$ via Neumann series, yielding
$$
w=(\I-\S_\tau)^{-1}\Delta^{-1}_{\psi}[V-\mathrm{curl}A].
$$

We look for this contraction, with $\tau$ sufficiently large, in the homogeneous Sobolev space~$\dot{H}^s$ with  norm $\|f\|_{\dot{H}^s}=\|(-\Delta)^{s/2}f\|_{2}$. First write
$$\Delta^{-1}_{\psi}=
\overline{\partial}^{-1}\circ \M_{-{\tau}}\circ \N_{-A}\circ\partial ^{-1} \circ
\M_{{\tau}}\circ \N_A$$ where the multiplier operators $\M_{\pm{\tau}}$ and $\N_{\pm{A}}$ are defined by
$$
\M_{\pm{\tau}}[F]=
 \mathbf{1}_Qe^{\pm i({\psi}+\overline{{\psi}})} F\quad \text{and}\quad \N_{\pm{A}}[F]=
 e^{\pm i(\partial ^{-1}\!\overline{A}-\overline{\partial}^{-1}\!A)}F.
$$
We will need good estimates for these multiplier operators. 
\begin{lemma}\label{isitgood}\cite{AFR}
 Let $0< s_1,s_2<1$. Then
\begin{equation*}\|\M_{\pm \tau}[F]\|_{\dot{H}^{-s_2}}\le C\tau^{-\min\{s_1,s_2\}}\|F\|_{\dot{H}^{s_1}},\quad \ \tau\ge 1.
\end{equation*}
\end{lemma}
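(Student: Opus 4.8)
The plan is to exploit the oscillatory nature of the multiplier $e^{\pm i(\psi+\overline\psi)}$, whose phase $\psi+\overline\psi=\frac{\tau}{8}((z-x)^2+\overline{(z-x)}^2)=\frac{\tau}{4}\Re((z-x)^2)$ has gradient growing linearly in $\tau$ away from the critical point $z=x$. The estimate is dual and self-adjoint-friendly: since $\M_{\pm\tau}$ is (formally) self-adjoint up to the cutoff $\mathbf 1_Q$ and complex conjugation of the phase, it suffices to prove $\|\M_{\pm\tau}[F]\|_{\dot H^{-s_2}}\lesssim \tau^{-\min\{s_1,s_2\}}\|F\|_{\dot H^{s_1}}$ for one choice of sign. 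By interpolation in $(s_1,s_2)$ it is enough to handle two endpoint-type cases: the \emph{balanced} case $s_1=s_2=s$, and then deduce the general case $s_1\neq s_2$ by noting that $\dot H^{s_1}\hookrightarrow \dot H^{\min\{s_1,s_2\}}$-type comparisons combined with the compact support of $\mathbf 1_Q F$ let us reduce to $s_1=s_2=\min\{s_1,s_2\}$ (the cutoff $\mathbf 1_Q$ makes lowering the positive Sobolev index essentially free, up to constants depending on $Q$, via the fact that $\mathbf 1_Q$ maps $\dot H^{s_1}$ boundedly into $\dot H^{s_2}$ when $0<s_2\le s_1<1$ — this is a Poincaré/Hardy-type fact on bounded domains).

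For the balanced case, I would write the bilinear form
$$
\langle \M_{\tau}[F], G\rangle = \int_Q e^{i(\psi+\overline\psi)} F\,\overline G
$$
and estimate it by $\tau^{-s}\|F\|_{\dot H^s}\|G\|_{\dot H^s}$. The natural route is a $TT^*$ / almost-orthogonality decomposition: write $e^{i(\psi+\overline\psi)}=\sum_j e^{i(\psi+\overline\psi)}\chi_j$ where $\chi_j$ is a dyadic partition of unity in $|z-x|$ (or, after a rotation diagonalizing the real quadratic form, in the two real variables separately), so that on the annulus where $|z-x|\sim 2^{-\ell}$ the phase gradient is $\sim\tau 2^{-\ell}$ and non-stationary integration by parts gains powers of $(\tau 2^{-\ell})^{-1}$, while the annulus near $z=x$ of size $2^{-\ell}\lesssim \tau^{-1/2}$ contributes trivially with measure $\tau^{-1}$. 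Summing the pieces, the region $2^{-\ell}\sim\tau^{-1/2}$ is the bottleneck and produces the gain $\tau^{-s}$ after pairing against the $\dot H^s\times\dot H^s$ norms (each factor of $\dot H^s$ tests against length scale $2^{-\ell}$ picking up $2^{-\ell s}$, and $2^{-\ell}=\tau^{-1/2}$ gives $\tau^{-s/2}\cdot\tau^{-s/2}=\tau^{-s}$). Since this lemma is quoted from \cite{AFR}, I would in practice cite that reference for the details and only reproduce this scaling heuristic.

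The main obstacle is the interplay between the \emph{homogeneous} Sobolev spaces $\dot H^{-s_2}$, $\dot H^{s_1}$ (which are delicate at low frequencies) and the fact that $\M_{\pm\tau}$ does not commute with Littlewood--Paley projections, so one cannot simply localize in frequency; the cutoff $\mathbf 1_Q$ is what saves us, confining everything to a fixed bounded set where homogeneous and inhomogeneous norms are comparable on the relevant frequency range and where the stationary-phase gain $\tau^{-s/2}$ per factor is clean. Controlling the error terms in the non-stationary phase integration by parts — where derivatives can fall on $F$, $\overline G$, or the cutoff — is the technical heart, and is precisely where the restriction $0<s_1,s_2<1$ enters, keeping the fractional Leibniz estimates and the commutators with $\mathbf 1_Q$ under control.
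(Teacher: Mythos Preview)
Your outline differs from the paper's proof in both stages, and the diagonal case is where the real gap lies.

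\textbf{Reduction to the diagonal.} The paper does not use any Poincar\'e/Hardy-type embedding for $\mathbf{1}_Q$. Instead it records the two ``free'' endpoint bounds
\[
\|\M_{\pm\tau}F\|_{2}\le C\|F\|_{\dot H^{s_1}},\qquad \|\M_{\pm\tau}F\|_{\dot H^{-s_2}}\le C\|F\|_{2},
\]
both coming from H\"older and Hardy--Littlewood--Sobolev, and then complex-interpolates each of these against the diagonal estimate $\|\M_{\pm\tau}F\|_{\dot H^{-s}}\le C\tau^{-s}\|F\|_{\dot H^{s}}$ (at $s=s_1$ or $s=s_2$). Your reduction via ``$\mathbf{1}_Q$ lowers the positive index'' is stated only for $s_2\le s_1$; for $s_1<s_2$ you would need the dual statement on the output side, and bounding $\|g\|_{\dot H^{-s_2}}$ by $\|g\|_{\dot H^{-s_1}}$ for compactly supported $g$ is not the same Poincar\'e fact. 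It can be salvaged by duality, but the paper's interpolation is cleaner and avoids the issue entirely.

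\textbf{The diagonal case.} Here your proposal is only a scaling heuristic, and it misses the mechanism the paper actually uses. The paper does not do physical-space dyadic decomposition in $|z-x|$ or integration by parts. Instead it (i) real-interpolates the trivial $L^2$ bound against the Besov endpoint
\[
\|\M_{\pm\tau}F\|_{\dot B^{-1}_{2,\infty}}\le C\tau^{-1}\|F\|_{\dot B^{1}_{2,1}},
\]
(ii) reduces this via $\|\,\cdot\,\|_{\dot B^{-1}_{2,\infty}}\lesssim\|\widehat{\,\cdot\,}\|_\infty$ and $\|\widehat{\,\cdot\,}\|_1\lesssim\|\,\cdot\,\|_{\dot B^{1}_{2,1}}$ to the Fourier-side bound $\|\widehat{\M_{\pm\tau}F}\|_\infty\le C\tau^{-1}\|\widehat F\|_1$, and (iii) proves the latter by observing that the phase is \emph{separable},
\[
\psi+\overline\psi=\tfrac{\tau}{4}\big((z_1-x_1)^2-(z_2-x_2)^2\big),
\]
so the oscillatory integral over $Q$ factors as a product of two one-dimensional integrals, each of which van der Corput bounds by $C\tau^{-1/2}$. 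Your annulus decomposition treats the phase as a generic nondegenerate 2D quadratic and never exploits this factorisation; making the ``$2^{-\ell s}$ per $\dot H^s$ factor'' heuristic rigorous would require a genuine fractional-integration-by-parts argument that you have not supplied. The separability plus van der Corput route is both the actual content of the cited reference and substantially shorter.
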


\begin{proof} 
By the H\"older and Hardy--Littlewood--Sobolev inequalities (see for example \cite[pp. 354]{van}), we have
\begin{equation}\label{ds}
\|\M_{\pm \tau}[F]\|_{2}\le C\|F\|_{\dot{H}^{s_1}},
\end{equation}
and 
\begin{equation}\label{fs}
\|\M_{\pm \tau}[F]\|_{\dot{H}^{-s_2}}\le C\|F\|_{2},
\end{equation}
with $0\le s_1,s_2<1$.
So by complex interpolation, it will suffice to prove that
\begin{equation}\label{rs}\|\M_{\pm \tau}[F]\|_{\dot{H}^{-s}}\le C\tau^{-s}\|F\|_{\dot{H}^{s}}.
\end{equation}
Indeed, if $s_2<s_1$ we interpolate with \eqref{ds}, taking $s=s_1$, and if $s_1<s_2$ we interpolate with \eqref{fs}, taking $s=s_2$.
Now by real interpolation with the trivial $L^2$ bound, \eqref{rs} would follow from
\begin{equation}\label{besov}
\|\M_{\pm \tau}F\|_{\dot{B}^{-1}_{2,\infty}}\le C\tau^{-1}\,\|F\|_{\dot{B}^{1}_{2,1}}
\end{equation}
(see Theorem 6.4.5 in \cite{BL}), where the Besov norms are defined as usual by
$$
\|f\|_{\bi}=\sup_{j\in\Z} 2^{-j}\|\P_jf\|_{L^2}\quad\quad \text{and}\quad\quad \|f\|_{\b}=\sum_{j\in\Z} 2^{j}\|\P_jf\|_{L^2}.
$$
Here, $\widehat{\P_j f}= \vartheta(2^{-j} |\cdot| ) \widehat{f}$  with $\vartheta$ satisfying $\text{supp}\, \vartheta \subset (1/2,2)$ and
$$
\sum_{j\in\Z} \vartheta(2^{-j} \cdot )=1.
$$
As
$
\|F\|_{\dot{B}^{-1}_{2,\infty}}\le C\|\widehat{F}\|_\infty
$
and
$
\|\widehat{F}\,\|_1\le C\|F\|_{\dot{B}^1_{2,1}},
$
the estimate \eqref{besov} would in turn follow from
\begin{equation}\label{pop}
\|\widehat{\M_{\pm \tau}F}\|_{\infty}\le C\tau^{-1}\,\|\widehat{F}\|_{1}.
\end{equation}
Now, by the Fourier inversion formula and Fubini's theorem,
\begin{align*}
|\widehat{\M_{\pm \tau}F}(\xi)|&=\frac{1}{(2\pi)^2}\Big|\int_Q e^{\pm i(\psi(z)+\overline{\psi(z}))}\int \widehat{F}(\omega)\,e^{iz\cdot \omega} d\omega\, e^{-iz\cdot \xi} dz\Big|\\
&\le \int\Big|\int_Q
e^{\pm i\tau\frac{(z_1-x_1)^2-(z_2-x_2)^2}{4}}\,  e^{iz\cdot (\omega-\xi)}
dz\Big| |\widehat{F}(\omega)|\, d\omega
\end{align*}
so that \eqref{pop} follows by two applications of van der Corput's lemma \cite[pp. 332]{van} (factoring the integral into the product of two integrals).
\end{proof}

We will also need the following lemma which is a consequence of properties of the Cauchy transform.

\begin{lemma}\label{easy} Let $\mathrm{curl} A\in L^2(\Omega)$ and $0\le s\le 1$. Then
\begin{equation*}\|\N_{\pm A}[F]\|_{\dot{H}^{s}}\le C_{\!A}\|F\|_{\dot{H}^{s}}.
\end{equation*}
\end{lemma}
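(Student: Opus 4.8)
The plan is to identify $\N_{\pm A}$ with multiplication by $e^{\pm i\varphi}$, where $\varphi:=\partial^{-1}\overline{A}-\overline{\partial}^{-1}\!A$, and to prove the estimate in the range $0\le s<1$; this is the natural range for the method and the one relevant below. The first step is to collect the properties of $\varphi$ that are needed, all of which are standard properties of the Cauchy transform. Since the Cauchy kernel is real, $\overline{\partial^{-1}\overline A}=\overline{\partial}^{-1}\!A$, so that $\varphi$ is purely imaginary; and since $A\in W^{1,\infty}(\R^2)$ has compact support, $\partial^{-1}\overline A$ and $\overline\partial^{-1}\!A$ are bounded continuous functions, so that $\|e^{\pm i\varphi}\|_{L^\infty}\le C_A$. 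This disposes of the case $s=0$. For the gradient I would use that $\partial\varphi=\overline A-\partial\overline{\partial}^{-1}\!A$ and $\overline\partial\varphi=\overline\partial\partial^{-1}\overline A-A$, together with the fact that $\partial\overline{\partial}^{-1}$ and $\overline\partial\partial^{-1}$ have Fourier multipliers of modulus one and are therefore isometries of $L^2(\R^2)$; since $A\in L^2(\R^2)$, this gives $\nabla\varphi\in L^2(\R^2)$ with $\|\nabla\varphi\|_{L^2}\le C_A$. As $\nabla(e^{\pm i\varphi}-1)=\pm i(\nabla\varphi)e^{\pm i\varphi}$, it follows that $e^{\pm i\varphi}-1$ lies in $\dot{H}^1(\R^2)\cap L^\infty(\R^2)$, with both norms $\le C_A$.

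With these facts in hand, I would fix $0<s<1$ and split $\N_{\pm A}[F]=F+(e^{\pm i\varphi}-1)F$; the first summand has $\dot{H}^s$ norm equal to $\|F\|_{\dot{H}^s}$, so only the second requires work. For it I would invoke the fractional Leibniz rule
\[
\big\|(e^{\pm i\varphi}-1)F\big\|_{\dot{H}^s}\le C\Big(\big\|(-\Delta)^{s/2}(e^{\pm i\varphi}-1)\big\|_{L^{2/s}}\|F\|_{L^{2/(1-s)}}+\big\|e^{\pm i\varphi}-1\big\|_{L^\infty}\|F\|_{\dot{H}^s}\Big),
\]
which is legitimate because $\tfrac s2+\tfrac{1-s}2=\tfrac12$. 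The last term is $\le C_A\|F\|_{\dot{H}^s}$ by the first paragraph. For the first term, the planar Sobolev embedding $\dot{H}^s(\R^2)\hookrightarrow L^{2/(1-s)}(\R^2)$, valid precisely because $0<s<1$, gives $\|F\|_{L^{2/(1-s)}}\le C\|F\|_{\dot{H}^s}$, while the Gagliardo--Nirenberg inequality $\|(-\Delta)^{s/2}g\|_{L^{2/s}(\R^2)}\le C\|\nabla g\|_{L^2}^{\,s}\|g\|_{L^\infty}^{\,1-s}$ applied to $g=e^{\pm i\varphi}-1$ bounds $\|(-\Delta)^{s/2}(e^{\pm i\varphi}-1)\|_{L^{2/s}}$ by $C_A$. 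Combining these estimates yields $\|\N_{\pm A}[F]\|_{\dot{H}^s}\le C_A\|F\|_{\dot{H}^s}$.

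The step I expect to be the main obstacle is precisely the treatment of $(e^{\pm i\varphi}-1)F$: because $\nabla\varphi$ is only controlled in $L^2$, and not in $L^\infty$, one cannot estimate $(\nabla\varphi)F$ pointwise, and a naive integration by parts produces a factor $\|F\|_{L^2}$ that is not dominated by $\|F\|_{\dot{H}^s}$. The way around this is to transfer all of the $s$ derivatives onto the smooth, $|z|^{-1}$-decaying factor $e^{\pm i\varphi}-1$ via the Leibniz rule, so that $F$ is only ever measured in $L^{2/(1-s)}$, which is exactly the norm the scale-critical planar embedding $\dot{H}^s\hookrightarrow L^{2/(1-s)}$ recovers from $\|F\|_{\dot{H}^s}$. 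This is also why the argument is confined to $s<1$: both that embedding and the exponent $2/s$ in the Gagliardo--Nirenberg step degenerate as $s\uparrow1$. The remaining ingredients---the endpoint form of the fractional Leibniz rule, and the membership $e^{\pm i\varphi}-1\in\dot{H}^1\cap L^\infty$---should be routine once the Cauchy-transform facts above are in place.
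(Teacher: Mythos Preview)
Your argument is correct on the range $0\le s<1$ (which, as you note, is all that is used later), but it is genuinely different from the paper's. The paper does not touch the intermediate exponents directly: it first observes that $-\Delta\varphi=2i\,\mathrm{curl}A$, so that Sobolev embedding gives $\nabla\varphi\in L^4$ and $\varphi\in L^\infty$ from $\mathrm{curl}A\in L^2$; then at $s=1$ it estimates $\nabla(e^{\pm i\varphi}F)=\pm i(\nabla\varphi)e^{\pm i\varphi}F+e^{\pm i\varphi}\nabla F$ via H\"older $L^4\times L^4\to L^2$ and the embedding $\|F\|_{L^4(\Omega)}\le C\|\nabla F\|_{L^{4/3}(\Omega)}$, and finally interpolates with the trivial $s=0$ bound. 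By contrast, you extract only $\nabla\varphi\in L^2$ (from $A\in L^2$ via the Beurling transform), and then run a fractional Leibniz/Gagliardo--Nirenberg argument at each $0<s<1$. The paper's route is more elementary---no Kato--Ponce endpoint estimate is needed, just H\"older, Sobolev and complex interpolation---and it also covers the endpoint $s=1$. Your route, on the other hand, uses only the scale-critical global embeddings $\dot H^s\hookrightarrow L^{2/(1-s)}$ and $\|(-\Delta)^{s/2}g\|_{L^{2/s}}\le C\|\nabla g\|_{L^2}^{\,s}\|g\|_{L^\infty}^{\,1-s}$, so it never needs the implicit localisation to $\Omega$ that the paper's $\|\nabla F\|_{L^{4/3}(\Omega)}\le C\|F\|_{\dot H^1}$ step relies on; in that sense your argument is cleaner as a global statement, at the price of invoking heavier machinery.
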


\begin{proof}
It is easy to calculate that
\begin{equation*}
-\Delta(\partial ^{-1}\!\overline{A}-\overline{\partial}^{-1}\!A)=-\overline{\partial}\,\overline{A}+\partial A=2i\mathrm{curl} A.
\end{equation*}
Thus, by Sobolev embedding, we have that
\begin{align*}
\|\nabla(\partial ^{-1}\!\overline{A}-\overline{\partial}^{-1}\!A)\|_{4}
&\le C\|\mathrm{curl} A\|_{4/3}\le C\|\mathrm{curl} A\|_{2}<\infty.
\end{align*}
and 
\begin{align*}
\|\partial ^{-1}\!\overline{A}-\overline{\partial}^{-1}\!A\|_{\infty}\le C\|\mathrm{curl} A\|_{2}<\infty.
\end{align*}
Now $$\nabla(\N_{\pm A}[F])=\pm i\nabla(\partial ^{-1}\!\overline{A}-\overline{\partial}^{-1}\!A)e^{\pm i(\partial ^{-1}\!\overline{A}-\overline{\partial}^{-1}\!A)}F+e^{\pm i(\partial ^{-1}\!\overline{A}-\overline{\partial}^{-1}\!A)}\nabla F$$ so that, by H\"older's inequality and Sobolev embedding,
\begin{align*}
\|\N_{\pm A}[F]\|_{\dot{H}^1}&\le C\|\nabla(\partial ^{-1}\!\overline{A}-\overline{\partial}^{-1}\!A)\|_4\| F\|_{L^4(\Omega)} + \|F\|_{\dot{H}^1}\\
&\le C_{\!A}\|\nabla F\|_{L^{4/3}(\Omega)} + \|F\|_{\dot{H}^1}\le C_{\!A}\|F\|_{\dot{H}^1}.
\end{align*}
On the other hand, we obviously have
$$
\|\N_{\pm A}[F]\|_{2}\le C_{\!A}\|F\|_{2}.
$$
Interpolating between the two estimates for $\N_{\pm A}$ gives the result.
\end{proof}

In the following lemma, we apply both of the previous lemmas twice enabling us to maximise the decay in $\tau$. This is necessary in order to show later that the terms involving~$w$ arising in Alessandrini's identity \eqref{ale} are indeed decay terms.

\begin{lemma}\label{celtic7} Let $0<s<1$. Then
$$
\|\Delta_\psi^{-1}[F]\|_{\dot{H}^{s}}\le C\tau^{-1}\|F\|_{\dot{H}^{s}},\quad \ \tau\ge 1.
$$
\end{lemma}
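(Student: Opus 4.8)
The plan is to read the estimate off directly from the factorization
$$\Delta^{-1}_{\psi}= \overline{\partial}^{-1}\circ \M_{-{\tau}}\circ \N_{-A}\circ\partial ^{-1} \circ \M_{{\tau}}\circ \N_A$$
recorded above, by composing the decay estimate of Lemma~\ref{isitgood}, the boundedness of $\N_{\pm A}$ from Lemma~\ref{easy}, and the elementary fact that $\partial^{-1}$ and $\overline{\partial}^{-1}$ are Fourier multipliers of order $-1$, hence bounded from $\dot H^{\sigma}$ to $\dot H^{\sigma+1}$ for every admissible $\sigma$ (indeed, up to the constant in their definition, they are isometries between these spaces, by Plancherel). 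The only choice to be made is how to split the auxiliary Sobolev exponents between the two occurrences of Lemma~\ref{isitgood}, and I would arrange them so that the two resulting powers of $\tau$ are $\tau^{-s}$ and $\tau^{-(1-s)}$, whose product is exactly $\tau^{-1}$.

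Concretely, I would track the regularity of $F$ through the six operators, from right to left, starting in $\dot H^s$. First, $\N_A$ maps $\dot H^s$ to $\dot H^s$ by Lemma~\ref{easy} (legitimate since $\mathrm{curl}A\in H^s\subset L^2(\Omega)$). Next, I apply Lemma~\ref{isitgood} with $s_1=s_2=s$, which is admissible because $0<s<1$: then $\M_\tau$ maps $\dot H^s$ into $\dot H^{-s}$ with a gain of $\tau^{-s}$, and by \eqref{ds} its output is moreover a compactly supported $L^2$ function. Since $\partial^{-1}$ has order $-1$, it then maps $\dot H^{-s}$ into $\dot H^{1-s}$, with $1-s\in(0,1)$. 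Applying Lemma~\ref{easy} once more (valid as $1-s\in[0,1]$), $\N_{-A}$ preserves $\dot H^{1-s}$. Now I apply Lemma~\ref{isitgood} a second time, with $s_1=s_2=1-s\in(0,1)$: then $\M_{-\tau}$ maps $\dot H^{1-s}$ into $\dot H^{-(1-s)}$ with a gain of $\tau^{-(1-s)}$, and again the output is compactly supported in $L^2$. Finally, $\overline{\partial}^{-1}$, of order $-1$, maps $\dot H^{-(1-s)}$ into $\dot H^{\,1-(1-s)}=\dot H^{s}$. Multiplying the operator norms along this chain yields
$$\|\Delta_\psi^{-1}[F]\|_{\dot H^s}\le C\,\tau^{-s}\,\tau^{-(1-s)}\,\|F\|_{\dot H^s}=C\tau^{-1}\|F\|_{\dot H^s},\qquad \tau\ge 1,$$
which is the claim.

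The delicate point — and the reason the remark preceding the lemma emphasises that \emph{each} of the two auxiliary lemmas is used \emph{twice} — is the exponent bookkeeping. Lemma~\ref{isitgood} forces both of its indices to lie strictly in $(0,1)$, so the two applications must be balanced against each other: the first is capped at a gain of $\tau^{-s}$ because its input already has regularity only $s$, and the second is obliged to land in $\dot H^{s-1}$ so that the final $\overline{\partial}^{-1}$ returns us to $\dot H^s$, which pins its output index to $1-s$; the choices $s_1=s_2=s$ and then $s_1=s_2=1-s$ are the unique ones keeping every index admissible while producing total decay $\tau^{-1}$. Everything else is routine: one just checks that after each $\M_{\pm\tau}$ the function is compactly supported and lies in $L^2$ (immediate from \eqref{ds} and the boundedness of $\N_{\pm A}$), so that $\partial^{-1}$ and $\overline{\partial}^{-1}$ genuinely act as Cauchy-transform convolutions and their order-$-1$ mapping property applies — and it is precisely the strict inequality $s<1$ that guarantees $\partial^{-1}$ of such a function lies in $\dot H^{1-s}$.
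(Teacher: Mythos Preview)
Your proof is correct and follows essentially the same route as the paper: both arguments chain the factorization $\Delta^{-1}_{\psi}= \overline{\partial}^{-1}\circ \M_{-{\tau}}\circ \N_{-A}\circ\partial ^{-1} \circ \M_{{\tau}}\circ \N_A$, applying Lemma~\ref{isitgood} with $s_1=s_2=s$ at the inner $\M_\tau$ and $s_1=s_2=1-s$ at the outer $\M_{-\tau}$, Lemma~\ref{easy} at regularities $s$ and $1-s$, and the order~$-1$ mapping of $\partial^{-1},\overline{\partial}^{-1}$, to obtain the total gain $\tau^{-s}\cdot\tau^{-(1-s)}=\tau^{-1}$. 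Your exposition is simply more verbose about the exponent bookkeeping and the well-posedness of the Cauchy transforms, but the logical content is identical.
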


\begin{proof} By two applications of each of the previous two lemmas,
\begin{align*}
\|\Delta_\psi^{-1}\|_{\dot{H}^{s}\to \dot{H}^{s}}
&\le \|\M_{-{\tau}}\circ \N_{-A}\circ\partial ^{-1} \circ
\M_{{\tau}}\circ \N_A\|_{\dot{H}^{s}\to \dot{H}^{s-1}}\\
&\le C\tau^{s-1}\|\N_{-A}\circ\partial ^{-1} \circ
\M_{{\tau}}\circ \N_A\|_{\dot{H}^{s}\to \dot{H}^{1-s}}\\
&\le C\tau^{s-1}\|
\M_{{\tau}}\circ \N_A\|_{\dot{H}^{s}\to \dot{H}^{-s}}\\
&\le C\tau^{s-1-s}\| \N_A\|_{\dot{H}^{s}\to \dot{H}^{s}}=C\tau^{-1},
\end{align*}
and we are done.
\end{proof}

In the following lemma, we use Lemma~\ref{isitgood} only once, and
gain some integrability using the Hardy--Littlewood--Sobolev
theorem; see for example \cite[pp. 354]{van}). By taking~$\tau$ sufficiently large, we obtain our contraction
and thus our magnetic Bukhgeim solution  as described above.

\begin{lemma}\label{celtic3} Let $0<s<1$. Then
$$
\|\S_\tau[F]\|_{\dot{H}^s}\le C\tau^{-\min\{2s,1-s\}}\|V-\mathrm{curl} A\|_{\dot{H}^{s}}\|F\|_{\dot{H}^s},
$$
whenever $\tau\ge1$ and 
$$
\|\S_\tau[F]\|_{\dot{H}^\frac{1-s}{2}}\le C\tau^{-\frac{1+s}{2}}\|V-\mathrm{curl} A\|_{\dot{H}^{s}}\|F\|_{\dot{H}^\frac{1-s}{2}}.
$$
\end{lemma}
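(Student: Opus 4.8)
The plan is to insert the factorization $\Delta_\psi^{-1}=\overline{\partial}^{-1}\circ\M_{-\tau}\circ\N_{-A}\circ\partial^{-1}\circ\M_\tau\circ\N_A$ into $\S_\tau[F]=\Delta_\psi^{-1}[(V-\mathrm{curl}A)F]$ and to carry $(V-\mathrm{curl}A)F$ through the chain
$$L^{r}\xrightarrow{\ \N_A,\ \M_\tau\ }L^{r}\xrightarrow{\ \partial^{-1}\ }\dot{H}^{2-\frac2r}\xrightarrow{\ \N_{-A}\ }\dot{H}^{2-\frac2r}\xrightarrow{\ \M_{-\tau}\ }\dot{H}^{-(1-\rho)}\xrightarrow{\ \overline{\partial}^{-1}\ }\dot{H}^{\rho},$$
where $\rho$ is the target regularity ($\rho=s$ for the first estimate, $\rho=\tfrac{1-s}{2}$ for the second) and $r\in(1,2)$ is chosen at the end. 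Here $\N_A$ and $\M_\tau$ are multiplications by bounded functions, hence trivially bounded on $L^r$; since $\partial^{-1}$ has Fourier symbol of modulus $|\xi|^{-1}$, the bound $\|\partial^{-1}g\|_{\dot{H}^{2-2/r}}\lesssim\|g\|_{L^r}$ is the Hardy--Littlewood--Sobolev inequality for the Riesz potential of order $2/r-1\in(0,1)$, valid precisely for $1<r<2$ — this is where the ``gain of integrability'' occurs; $\N_{-A}$ is controlled by Lemma~\ref{easy} as long as $0\le 2-2/r\le 1$; the factor $\M_{-\tau}$ is the one place where Lemma~\ref{isitgood} is invoked, with $s_1=2-2/r$ and $s_2=1-\rho$, contributing the decay $\tau^{-\min\{2-2/r,\,1-\rho\}}$; and $\overline{\partial}^{-1}$ supplies one free derivative, $\dot{H}^{-(1-\rho)}\to\dot{H}^\rho$. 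The entrance to the chain comes from H\"older's inequality and the Sobolev embedding $\dot{H}^\sigma\hookrightarrow L^{2/(1-\sigma)}$: writing $\tfrac1r=\tfrac1{p_1}+\tfrac{1-\rho}{2}$ with $p_1\le\tfrac{2}{1-s}$ one gets
$$\|(V-\mathrm{curl}A)F\|_{L^r}\le \|V-\mathrm{curl}A\|_{L^{p_1}}\|F\|_{L^{2/(1-\rho)}}\le C\,\|V-\mathrm{curl}A\|_{\dot{H}^s}\|F\|_{\dot{H}^\rho},$$
the first factor being estimated via $\dot{H}^s\hookrightarrow L^{2/(1-s)}$ (and, when $p_1<\tfrac{2}{1-s}$, the compact support of $V-\mathrm{curl}A$).

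It then only remains to choose $r$ so that the power of $\tau$ comes out right while every intermediate index stays admissible. For the first estimate one takes $r=\tfrac{2}{2-\min\{2s,1-s\}}$, so that $2-\tfrac2r=\min\{2s,1-s\}\le 1-s$ and Lemma~\ref{isitgood} yields the factor $\tau^{-\min\{2s,1-s\}}$; a direct check gives $r\in(1,\tfrac32)$, $0<2-\tfrac2r<1$, and $1\le p_1\le\tfrac{2}{1-s}$ (the inequality $p_1\le\tfrac{2}{1-s}$ being equivalent to $2-\tfrac2r\le 2s$, which holds by the choice of $r$). For the second estimate the constraint $p_1\le\tfrac{2}{1-s}$ forces $r\le\tfrac{4}{3-s}$, i.e. $2-\tfrac2r\le\tfrac{1+s}{2}=1-\rho$, whereas optimality of the decay wants $2-\tfrac2r\ge 1-\rho$; the two conditions meet exactly at $r=\tfrac{4}{3-s}\in(\tfrac43,2)$, for which $2-\tfrac2r=\tfrac{1+s}{2}$ and $p_1=\tfrac{2}{1-s}$, so Lemma~\ref{isitgood} now delivers $\tau^{-(1+s)/2}$. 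In both cases $2-\tfrac2r$ and $1-\rho$ lie strictly between $0$ and $1$, so Lemma~\ref{isitgood} applies, $2-\tfrac2r\in[0,1]$, so Lemma~\ref{easy} applies, and all Lebesgue exponents occurring are admissible.

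I expect the only real difficulty to be this bookkeeping. One has to keep $r$ strictly inside $(1,2)$ — so that the Cauchy transform genuinely improves integrability and the index $2-\tfrac2r$ handed to Lemma~\ref{isitgood} is admissible — while simultaneously honoring $p_1\le\tfrac{2}{1-s}$, which is what keeps $\|V-\mathrm{curl}A\|_{\dot{H}^s}$ itself, rather than some $L^p$-norm of $V-\mathrm{curl}A$, on the right-hand side. In the first estimate these two demands compete, and which one is binding is governed by whether $2s\le 1-s$, i.e. by whether $s\le\tfrac13$; this is precisely why the statement carries $\min\{2s,1-s\}$ and not a single power. In the second estimate there is no room to spare — the constraints determine $r$ uniquely — but the index $1-\rho=\tfrac{1+s}{2}$ then available for $\M_{-\tau}$ is larger, so the gain is upgraded to $\tau^{-(1+s)/2}$, which is what the contraction argument of the next lemma will need.
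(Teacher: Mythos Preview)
Your argument is correct and follows essentially the same route as the paper: both proofs estimate the product $(V-\mathrm{curl}A)F$ in a Lebesgue space via H\"older and Sobolev embedding, then push it through the chain $\N_A,\M_\tau,\partial^{-1},\N_{-A},\M_{-\tau},\overline{\partial}^{-1}$ using Hardy--Littlewood--Sobolev, Lemma~\ref{easy}, and a single application of Lemma~\ref{isitgood}, with the same exponents (your $r$ equals the paper's $p=\tfrac{1}{1-s}$ or $p^*=\tfrac{2}{1+s}$ in the first estimate and $q=\tfrac{4}{3-s}$ in the second). The only cosmetic difference is that you parametrize by $r$ and derive the case split $s\lessgtr\tfrac13$ from the constraint $p_1\le\tfrac{2}{1-s}$, whereas the paper writes the two cases out separately; note also that your interval for $r$ in the first estimate should be $(1,\tfrac32]$ rather than $(1,\tfrac32)$, since $r=\tfrac32$ at $s=\tfrac13$, but this does not affect the argument.
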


\begin{proof} By the Cauchy--Schwarz and Hardy--Littlewood--Sobolev inequalities,
$$
\|(V-\mathrm{curl} A)F\|_p\le \|V-\mathrm{curl} A\|_{2p}\|F\|_{2p}\le C\|V-\mathrm{curl} A\|_{\dot{H}^s}\|F\|_{\dot{H}^s},
$$
where $p=\frac{1}{1-s}$. Thus, as $\S_\tau[F]=\Delta_\psi^{-1}[(V-\mathrm{curl} A)F]$, for the first inequality it will suffice to prove that
$$
\|\Delta_\psi^{-1}\|_{L^p\to \dot{H}^{s}}\le C\tau^{{-\min\{2s,1-s\}}}.
$$
When $0<s<1/3$, by Lemma~\ref{isitgood}, we have
\begin{align*}
\|\Delta_\psi^{-1}\|_{L^p\to \dot{H}^{s}}
&\le \|\M_{-{\tau}}\circ \N_{-A}\circ\partial ^{-1} \circ
\M_{{\tau}}\circ \N_A\|_{L^p\to \dot{H}^{s-1}}\\
&\le C\tau^{-2s}\|N_{-A}\circ\partial ^{-1} \circ
\M_{{\tau}}\circ \N_A\|_{L^p\to \dot{H}^{2s}}\\
&\le C\tau^{-2s}\|\M_{{\tau}}\circ \N_A\|_{L^p\to \dot{H}^{2s-1}}\\
&\le C\tau^{-2s}\|\M_{{\tau}}\circ \N_A\|_{L^p\to L^p}, 
\end{align*}
where the final inequality is by Hardy--Littlewood--Sobolev.
When $s\ge 1/3$, we also use H\"older's inequality at the end;
\begin{align*}
\|\Delta_\psi^{-1}\|_{L^p\to \dot{H}^{s}}
&\le \|\M_{-{\tau}}\circ \N_{-A}\circ\partial ^{-1} \circ
\M_{{\tau}}\circ \N_A\|_{L^p\to \dot{H}^{s-1}}\\
&\le C\tau^{s-1}\|N_{-A}\circ\partial ^{-1} \circ
\M_{{\tau}}\circ \N_A\|_{L^p\to \dot{H}^{1-s}}\\
&\le C\tau^{s-1}\|\M_{{\tau}}\circ \N_A\|_{L^{p}\to \dot{H}^{-s}}\\
&\le C\tau^{s-1}\|\M_{{\tau}}\circ \N_A\|_{L^p\to L^{p^*}},
\end{align*}
where $p^*=\frac{2}{s+1}$, and so we are done.

For the second inequality we note that by the H\"older and the Hardy--Littlewood--Sobolev inequalities,
$$
\|(V-\mathrm{curl} A)F\|_q\le C\|V-\mathrm{curl} A\|_{\dot{H}^s}\|F\|_{\dot{H}^\frac{1-s}{2}},
$$
where $q=\frac{4}{3-s}$. Thus, as $\S_\tau[F]=\Delta_\psi^{-1}[(V-\mathrm{curl} A)F]$, for the second inequality it will suffice to prove that
$$
\|\Delta_\psi^{-1}\|_{L^q\to \dot{H}^{\frac{1-s}{2}}}\le C\tau^{-\frac{1+s}{2}}.
$$
Again, by Lemma~\ref{isitgood}, we have
\begin{align*}
\|\Delta_\psi^{-1}\|_{L^q\to \dot{H}^{\frac{1-s}{2}}}
&\le \|\M_{-{\tau}}\circ \N_{-A}\circ\partial ^{-1} \circ
\M_{{\tau}}\circ \N_A\|_{L^q\to \dot{H}^{-\frac{1+s}{2}}}\\
&\le C\tau^{-\frac{1+s}{2}}\|N_{-A}\circ\partial ^{-1} \circ
\M_{{\tau}}\circ \N_A\|_{L^q\to \dot{H}^{\frac{1+s}{2}}}\\
&\le C\tau^{-\frac{1+s}{2}}\|\M_{{\tau}}\circ \N_A\|_{L^q\to \dot{H}^{\frac{s-1}{2}}}\\
&\le C\tau^{-\frac{1+s}{2}}\|\M_{{\tau}}\circ \N_A\|_{L^q\to L^{q}},
\end{align*}
where the final inequality is by Hardy--Littlewood--Sobolev, and so the proof is complete.
 \end{proof}

It remains to show
that a constant multiple of the right-hand side of Alessandrini's identity \eqref{ale} converges;  $$\T^\tau_{(1+w_1)(1+\overline{w}_2)} [V_1-V_2] \to e^{i(\partial ^{-1}\!\overline{A}-\overline{\partial}^{-1}\!A)}(V_1-V_2)$$ as $\tau$ tends to infinity, where
$$
\T^\tau_{a}[F](x)=\frac{\tau}{4\pi}\int_{\R^2}
\M_{{\tau}}\circ\N_{A} [F](z){a(z)}\,dz.
$$
First we show that $\T_w^\tau F$ can be considered to be a remainder term.

\begin{proposition}\label{Remainder}
Let $F  \in \dot{H}^{s}$ with $0<s<1$. Then
\[ \lim_{\tau \to \infty}   \T^\tau_w[F](x)=0,\quad x\in\D.\]
Moreover, if  $\tau$ is sufficiently large, then
$$
\sup_{x\in\D}|\T^\tau_w[F](x)|\le C\tau^{-s}\|V-\mathrm{curl}A\|_{\dot{H}^{s}}\|F\|_{\dot{H}^{s}}.
$$
\end{proposition}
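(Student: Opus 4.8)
The plan is to estimate, by duality, the integral that defines $\T^\tau_w$. Unpacking the definitions, for each fixed $x\in\D$,
$$\T^\tau_w[F](x)=\frac{\tau}{4\pi}\int_{\R^2}\M_\tau\circ\N_A[F](z)\,w_{\tau,x}(z)\,dz,$$
where $w_{\tau,x}=(\I-\S_\tau)^{-1}\Delta_\psi^{-1}[V-\mathrm{curl}A]$ is the magnetic Bukhgeim remainder constructed above (so we are tacitly assuming $V-\mathrm{curl}A\in\dot{H}^s$, which that construction requires), and the integral is really over $Q$ because of the cutoff $\mathbf 1_Q$ built into $\M_\tau$. I would bound this by the $\dot{H}^{-s}$--$\dot{H}^{s}$ pairing,
$$|\T^\tau_w[F](x)|\le \frac{\tau}{4\pi}\,\big\|\M_\tau\circ\N_A[F]\big\|_{\dot{H}^{-s}}\,\big\|w_{\tau,x}\big\|_{\dot{H}^{s}},$$
the point being that the first factor carries a gain of $\tau^{-s}$ and the second a gain of $\tau^{-1}$, so the explicit $\tau$ out front leaves a net decay of $\tau^{-s}$.

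For the first factor I would apply Lemma~\ref{isitgood} with $s_1=s_2=s$, followed by Lemma~\ref{easy}, to get
$$\big\|\M_\tau\circ\N_A[F]\big\|_{\dot{H}^{-s}}\le C\tau^{-s}\big\|\N_A[F]\big\|_{\dot{H}^{s}}\le C\tau^{-s}\|F\|_{\dot{H}^{s}}.$$
For the second factor, Lemma~\ref{celtic7} gives $\|\Delta_\psi^{-1}[V-\mathrm{curl}A]\|_{\dot{H}^s}\le C\tau^{-1}\|V-\mathrm{curl}A\|_{\dot{H}^s}$, while Lemma~\ref{celtic3} shows $\|\S_\tau\|_{\dot{H}^s\to\dot{H}^s}\le \frac12$ once $\tau$ exceeds a threshold depending only on $s$ and $\|V-\mathrm{curl}A\|_{\dot{H}^s}$ (not on $x$), so that $(\I-\S_\tau)^{-1}$ is bounded on $\dot{H}^s$ with norm at most $2$; hence
$$\big\|w_{\tau,x}\big\|_{\dot{H}^s}\le 2\big\|\Delta_\psi^{-1}[V-\mathrm{curl}A]\big\|_{\dot{H}^s}\le C\tau^{-1}\|V-\mathrm{curl}A\|_{\dot{H}^s}.$$
Multiplying the two bounds by $\frac{\tau}{4\pi}$ yields
$$\sup_{x\in\D}\big|\T^\tau_w[F](x)\big|\le C\tau^{-s}\|V-\mathrm{curl}A\|_{\dot{H}^s}\|F\|_{\dot{H}^s}$$
for all sufficiently large $\tau$, which is the asserted estimate; since $\tau^{-s}\to0$, the pointwise limit is zero as a consequence.

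The step I expect to need the most care is not the chain of lemma applications, which is routine, but the justification of the duality inequality. Since $w_{\tau,x}$ is built from Cauchy transforms it need not lie in $L^2(\R^2)$ (it decays only like $|z|^{-1}$ at infinity), so the pairing must be read as the genuine $\dot{H}^{-s}$--$\dot{H}^{s}$ duality; to see it coincides with the Lebesgue integral above one uses that $\M_\tau\circ\N_A[F]$ is compactly supported and, by the Sobolev embedding $\dot{H}^s(\R^2)\hookrightarrow L^{2/(1-s)}$, that the integrand is absolutely integrable, after which inserting $(-\Delta)^{\pm s/2}$ reduces everything to Cauchy--Schwarz in $L^2$. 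It then remains only to observe that every constant appearing above depends solely on $s$, on $Q$, and on $A$ through $\|\mathrm{curl}A\|_{L^2(\Omega)}$, hence is uniform in $x\in\D$, as the supremum bound demands.
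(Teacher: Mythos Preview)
Your proof is correct and follows essentially the same route as the paper's: duality via $\dot H^{-s}$--$\dot H^s$, Lemma~\ref{isitgood} and Lemma~\ref{easy} on $\M_\tau\circ\N_A[F]$, then Lemma~\ref{celtic3} to invert $\I-\S_\tau$ and Lemma~\ref{celtic7} on $\Delta_\psi^{-1}[V-\mathrm{curl}A]$. The paper is terser and does not pause over the duality-versus-integral issue you flag, so your extra paragraph there is a genuine addition rather than a deviation.
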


\begin{proof}
By Lemmas~\ref{isitgood} and \ref{easy},
\[ \begin{aligned} |\T^\tau_w[F](x)| & \le  C\tau  \| \M_{{\tau}}\circ\N_{A} [F]\|_{\dot{H}^{-s}}\|w\|_{\dot{H}^{s}}
\\  & \le  C\tau^{1-s}  \|F\|_{\dot{H}^{s}}  \|(\I-\S_\tau)^{-1}\S_\tau[1]\|_{\dot{H}^{s}}.
\end{aligned}\]
By Lemma \ref{celtic3},  we can treat
 $(\I-\S_\tau)^{-1}$ by Neumann series to deduce  that it is a bounded operator on $\dot{H}^{s}$ whenever $\tau\ge 1$ and  $$C\tau^{-\min\{2s,1-s\}}\|V-\mathrm{curl}A\|_{\dot{H}^s}\le \frac{1}{2}.$$ Then
\[\begin{aligned}
|\T^\tau_w[F](x)| & \le  C\tau^{1-s}  \|F\|_{\dot{H}^{s}}  \|\Delta_\psi^{-1}[V-\mathrm{curl}A]\|_{\dot{H}^{s}}  \\
& \le   C\tau^{-s}   \|F\|_{\dot{H}^{s}}  \|V-\mathrm{curl}A\|_{\dot{H}^{s}},
\end{aligned} \]
by an application of Lemma~\ref{celtic7}, which is the desired estimate.
\end{proof}

We also need the following lemma so that the final term can also be treated as a remainder term. 

\begin{proposition}\label{Remainder2}
Let $F  \in \dot{H}^{s}$ with $0<s<1$. Then
\[ \lim_{\tau \to \infty}   \T^\tau_{w_1\overline{w}_2}[F](x)=0,\quad x\in\D.\]
Moreover, if $\tau$ is sufficiently large, then
$$
\sup_{x\in\D}|\T^\tau_{w_1\overline{w}_2}[F](x)|\le C\tau^{-s}\|V_1-\mathrm{curl}A\|_{\dot{H}^{s}}\|V_2-\mathrm{curl}A\|_{\dot{H}^{s}}\|F\|_{\dot{H}^{s}}.
$$
\end{proposition}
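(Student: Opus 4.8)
The plan is to mimic the argument for Proposition~\ref{Remainder}, but now with the amplitude $a = w_1\overline{w}_2$, which requires controlling a \emph{product} of two Bukhgeim remainders rather than a single one. First I would write, exactly as before,
$$
|\T^\tau_{w_1\overline{w}_2}[F](x)| \le C\tau\,\big\|\M_{\tau}\circ\N_A[F]\big\|_{\dot{H}^{-\sigma}}\,\big\|w_1\overline{w}_2\big\|_{\dot{H}^{\sigma}}
$$
for a suitable $\sigma$, and then apply Lemma~\ref{isitgood} together with Lemma~\ref{easy} to the first factor, gaining a power $\tau^{-\sigma'}$ from the $\M_\tau$ multiplier. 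The key new point is that we cannot afford to put $w_1\overline{w}_2$ into $\dot{H}^s$ directly at full regularity: each $w_i$ only satisfies $\|w_i\|_{\dot{H}^s} \le C\tau^{-1}\|V_i-\mathrm{curl}A\|_{\dot{H}^s}$ by Lemma~\ref{celtic7} (through the Neumann series for $(\I-\S_\tau)^{-1}$ as in the proof of Proposition~\ref{Remainder}), and $\dot{H}^s$ with $0<s<1$ is not a Banach algebra in two dimensions. This is the main obstacle, and it is resolved by the \emph{second} estimate in Lemma~\ref{celtic3}: that estimate is precisely engineered so that $(\I-\S_\tau)$ is also invertible on $\dot{H}^{(1-s)/2}$, giving $\|w_i\|_{\dot{H}^{(1-s)/2}} \le C\tau^{-(1+s)/2}\cdot\tau^{?}$ — more precisely $\|w_i\|_{\dot{H}^{(1-s)/2}}\le C\tau^{-(1+s)/2}\|V_i-\mathrm{curl}A\|_{\dot{H}^s}$ after applying the relevant $\Delta_\psi^{-1}$ bound to $\S_\tau[1]$.

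Next I would estimate the product using the fractional Leibniz/Sobolev multiplication rule in the plane: for $0<s<1$ one has $\dot{H}^{(1-s)/2}\cdot \dot{H}^{(1-s)/2}\hookrightarrow \dot{H}^{-s}$, since $(1-s)/2 + (1-s)/2 - (-s) = 1 > 2\cdot 0$ and both exponents $(1-s)/2$ and $-s$ lie in $(-1,1)$; concretely this follows from Hölder with the Hardy--Littlewood--Sobolev embeddings $\dot{H}^{(1-s)/2}\hookrightarrow L^{4/(1+s)}$ and the dual embedding $L^{2/(2-s)}\hookrightarrow \dot{H}^{-s}$, noting $\tfrac{1+s}{4}+\tfrac{1+s}{4} = \tfrac{1+s}{2} = 1-\tfrac{2-s}{2}$... so in fact $w_1\overline{w}_2\in L^{2/(1+s)}\hookrightarrow \dot{H}^{s-1}$, which is the space one actually wants to pair against $\partial^{-1}$-type operators. (I would adjust the precise indices so the two gains line up; the point is only that each $w_i$ contributes a $\tau^{-(1+s)/2}$ and the product lands in a space on which $\M_\tau$ provides enough additional decay.) Combining: $\|w_1\overline{w}_2\|_{\dot{H}^{s-1}} \le C\tau^{-(1+s)}\|V_1-\mathrm{curl}A\|_{\dot{H}^s}\|V_2-\mathrm{curl}A\|_{\dot{H}^s}$, and then Lemma~\ref{isitgood} applied to $\M_\tau\circ\N_A[F]$ (landing it in the dual of $\dot{H}^{s-1}$, i.e. $\dot{H}^{1-s}$) costs $\tau^{-(1-s)}$ at worst, so the total is $\tau \cdot \tau^{-(1-s)}\cdot\tau^{-(1+s)} = \tau^{-1}$, comfortably beating the claimed $\tau^{-s}$ for the uniform bound; I would then simply record the cruder $\tau^{-s}$ bound that matches the statement. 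Taking $\tau\to\infty$ gives the pointwise limit, uniformly in $x\in\D$ since all constants are independent of $x$ (the square $Q\supset\Omega$ and the $\N_A$ bounds do not see $x$).

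Finally, for the qualitative limit $\T^\tau_{w_1\overline{w}_2}[F](x)\to 0$ I would note it is immediate from the quantitative bound once $\tau$ is large enough that the Neumann series converges, which is guaranteed for $\tau\ge 1$ with $C\tau^{-\min\{2s,1-s\}}\|V_i-\mathrm{curl}A\|_{\dot{H}^s}\le \tfrac12$ exactly as in Proposition~\ref{Remainder}, together with the analogous smallness condition coming from the second estimate of Lemma~\ref{celtic3} on $\dot{H}^{(1-s)/2}$. I expect the only delicate bookkeeping to be choosing the intermediate exponent and verifying that both $w_i$ can simultaneously be estimated in $\dot{H}^{(1-s)/2}$ with the stated $\tau$-decay — this is exactly what the second half of Lemma~\ref{celtic3} was proved for, so no genuinely new estimate is needed.
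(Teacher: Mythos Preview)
Your central insight is correct and is exactly the paper's: use the second inequality of Lemma~\ref{celtic3} to place each $w_i$ in $\dot{H}^{(1-s)/2}$ with norm $\le C\tau^{-(1+s)/2}\|V_i-\mathrm{curl}A\|_{\dot{H}^s}$, then control the product via the Sobolev embeddings $\dot{H}^{(1-s)/2}\hookrightarrow L^{4/(1+s)}$ and $\dot{H}^s\hookrightarrow L^{2/(1-s)}$. That part is fine.

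The genuine gap is the step where you invoke Lemma~\ref{isitgood} to land $\M_\tau\circ\N_A[F]$ in $\dot{H}^{1-s}$ with decay $\tau^{-(1-s)}$. Lemma~\ref{isitgood} is stated for $\|\M_{\pm\tau}\|_{\dot H^{s_1}\to\dot H^{-s_2}}$ with $0<s_1,s_2<1$, i.e.\ it only gives decay when the \emph{target} is a negative Sobolev space. Here $1-s>0$, so the lemma does not apply; in fact, since $\nabla e^{i(\psi+\overline\psi)}$ has size $\sim\tau$ on $Q$, the norm $\|\M_\tau G\|_{\dot H^{1-s}}$ \emph{grows} like $\tau^{1-s}$. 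Your accounting $\tau\cdot\tau^{-(1-s)}\cdot\tau^{-(1+s)}=\tau^{-1}$ is therefore not justified. (The related embedding $L^{2/(1+s)}\hookrightarrow\dot H^{s-1}$ you write down is also off: the HLS dual exponent for $\dot H^{-(1-s)}$ in the plane is $2/(2-s)$, not $2/(1+s)$.)

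The fix is simply to drop the appeal to Lemma~\ref{isitgood} altogether. The paper uses straight H\"older with exponents $q=\tfrac{2}{1-s}$ and $r=\tfrac{4}{1+s}$ (so $\tfrac1q+\tfrac2r=1$):
\[
|\T^\tau_{w_1\overline{w}_2}[F](x)|\le C\tau\,\|\N_A[F]\|_{q}\,\|w_1\|_{r}\,\|w_2\|_{r}
\le C\tau\,\|F\|_{\dot H^s}\,\|w_1\|_{\dot H^{(1-s)/2}}\,\|w_2\|_{\dot H^{(1-s)/2}},
\]
and then two applications of the second inequality in Lemma~\ref{celtic3} give exactly $\tau\cdot\tau^{-(1+s)/2}\cdot\tau^{-(1+s)/2}=\tau^{-s}$. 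No extra decay from $\M_\tau$ is available, and none is needed.
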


\begin{proof}
By H\"older's inequality, the Hardy--Littlewood--Sobolev inequality and Lemma \ref{easy}, with $q=\frac{2}{1-s}$ and $r=\frac{4}{1+s}$,
\[ \begin{aligned} |\T^\tau_{w_1\overline{w}_2}[F](x)| & \le  C\tau  \|\N_{A} [F]\|_{q}\|w_1\|_{r}\|w_2\|_{r}
\\
& \le  C\tau  \|F\|_{\dot{H}^{s}}\|w_1\|_{\dot{H}^{\frac{1-s}{2}}}\|w_2\|_{\dot{H}^{\frac{1-s}{2}}}
\\  & \le  C\tau \|F\|_{\dot{H}^{s}}  \|(\I-\S^{V_1}_\tau)^{-1}\S^{V_1}_\tau[1]\|_{\dot{H}^{\frac{1-s}{2}}}\|(\I-\S^{V_2}_\tau)^{-1}\S^{V_2}_\tau[1]\|_{\dot{H}^{\frac{1-s}{2}}}.
\end{aligned}\]
By the second inequality of Lemma \ref{celtic3},  the inverse operators can be treated by Neumann series to deduce  that they are bounded  on $\dot{H}^{\frac{1-s}{2}}$ whenever $\tau\ge 1$ and  $$C\tau^{-\frac{1+s}{2}}\|V_1-\mathrm{curl}A\|_{\dot{H}^s}+C\tau^{-\frac{1+s}{2}}\|V_2-\mathrm{curl}A\|_{\dot{H}^s}\le \frac{1}{2}.$$ 
Thus, by two applications of Lemma~\ref{celtic3},
\[\begin{aligned}
|\T^\tau_{w_1\overline{w}_2}[F](x)| & \le  C\tau \|F\|_{\dot{H}^{s}}  \|\S^{V_1}_\tau[1]\|_{\dot{H}^{\frac{1-s}{2}}}\|\S^{V_2}_\tau[1]\|_{\dot{H}^{\frac{1-s}{2}}}  \\
& \le   C\tau^{-s}  \|F\|_{\dot{H}^{s}}\|V_1-\mathrm{curl}A\|_{\dot{H}^{s}}\|V_2-\mathrm{curl}A\|_{\dot{H}^{s}},
\end{aligned} \]
 which is the desired estimate.
\end{proof}

 Noting that $e^{i({\psi(z)}+\overline{{\psi(z}}))}=
\exp\big(i\tau\frac{(z_1-x_1)^2-(z_2-x_2)^2}{4}\big)$, it remains to prove
\begin{equation}\label{ipo}
\lim_{\tau\to \infty} \T^\tau_{1} [F]=\N_{A} [F]
\end{equation}
in $L^2(\mathbb{R}^2)$, 
where $\T_1^{\tau}$ is defined by
\begin{equation*}\label{no}
\T_1^{\tau}[F](x)=\frac{\tau}{4\pi}\int
\exp\big(i\tau\tfrac{(z_1-x_1)^2-(z_2-x_2)^2}{4}\big) \N_{A}[F](z)\,dz.
\end{equation*}
Now when $F$ is a Schwartz function, this expression is equal to
$e^{i\frac{1}{\tau}\Box}\N_{A}[F](x)$, where
\begin{equation*}\label{nonon}
e^{i\frac{1}{\tau}\Box}[G](x)=\frac{1}{(2\pi)^2}\int_{\R^2}e^{ix\cdot
\xi}\, e^{-i\frac{1}{\tau}(\xi_1^2-\xi_2^2)} \,\widehat{G}(\xi)\, d\xi.
\end{equation*}
We see that $\T^\tau_{1}[F]$ solves the time-dependent
nonelliptic Schr\"odinger equation, $$i\partial_t u +\Box u=0,$$ where
$\Box =\partial_{x_1x_1}-\partial_{x_2x_2}$, with initial data $\N_{A}[F]$
at time $1/\tau$. Thus the desired convergence is a well--known consequence of the time-dependent Schr\"odinger theory.

\appendix

\section{The {\small DN} map and the scattering data}\label{samp}

If zero is not a Dirichlet eigenvalue,  then there is a unique weak solution to the Dirichlet problem~\eqref{dp} that satisfies
 \begin{equation}\label{ref}
 \|u\|_{H^1(\Omega)}\le C\|f\|_{H^{1/2}(\partial \Omega)}.
 \end{equation}
Here $H^{1/2}(\partial \Omega):=H^1(\Omega)/H^1_0(\Omega)$, where $H^1_0(\Omega)$ denotes the closure of
$C^\infty_0(\Omega)$ in $H^1(\Omega)$.
The {\small DN} map $\Lambda_{V}$, taking values in the dual of $H^{1/2}(\partial \Omega)$, is then defined by
\begin{equation}\label{what} \Big \langle \Lambda_{V}[f],\psi \Big \rangle_{\!\partial\Omega} =\int_{\Omega} (A^2+V) u\overline{\Psi} +\nabla u\cdot \nabla \overline{\Psi} +iA\cdot(u\nabla \overline{\Psi}-\overline{\Psi}\nabla u)\end{equation}
for all $\Psi\in H^1(\Omega)$ with $\psi=\Psi+H^1_0(\Omega)$.
When the solution and boundary are sufficiently smooth, this
definition coincides with that of the introduction by Green's
identity. 

The relevant scattering question considers the Schr\"odinger equation at a fixed positive energy~$k^2$. That is to say, we  consider the equation
\begin{equation}\label{scod}
-(\nabla +iA)^2 u+Vu=k^2u.
\end{equation}
The outgoing scattering solutions $\Psi_{\!\theta}$ are perturbations of the plane waves $e^{ikx\cdot\theta}$; we refer to \cite{PSU} for the precise definition.  Supposing that they are the same for two different electric potentials $V_1$ and $V_2$,  we have in particular that
$$
V_1\Psi_{\!\theta}= (\nabla +iA)^2 \Psi_{\!\theta}+k^2\Psi_{\!\theta}=V_2\Psi_{\!\theta},
$$
so that
\begin{equation}\label{al}
\int_\Omega \big(V_1-V_2\big)\Psi_{\!\theta}\overline{v}=0.
\end{equation}
On the other hand, any solution $u\in H^1(\Omega)$ of \eqref{scod} can be approximated in $L^2(\Omega)$ by the scattering solutions (see the proof of \cite[Proposition 2.4]{PSU}), so we can approximate
\begin{equation}\label{int}
\int_{\Omega}\big(V_1-V_2\big)u\overline{v}
\end{equation}
 by a sequence that takes the form of the left-hand side of \eqref{al}. Hence we deduce that, if the scattering solutions coincide, then the integrals appearing in \eqref{int} and \eqref{Alessandrini} must be identically zero.
 
Note that we have completely bypassed the associated {\small DN} maps $\Lambda_{V_1-k^2}$ and $\Lambda_{V_2-k^2}$ in order to conclude that \eqref{int} is zero. This was made possible by assuming that the scattering solutions are equal everywhere. Alternatively we could have supposed the weaker hypothesis 
that the scattering solutions {\it coincide at infinity}. That is to say that the {\it scattering amplitudes} are equal. The usual argument then consists of deducing from this hypothesis that the {\small DN} maps also coincide; see \cite{AFR1} for such an argument in the purely electric case. Certain resolvent estimates are required for this argument, and it appears that less is known regarding estimates of this type for the magnetic Schr\"odinger equation.

\end{document}